\numberwithin{figure}{section}
\newcommand{\field}[1]{\mathbb{#1}}
\newcommand{\N}{\field{N}}
\newcommand{\Z}{\field{Z}}
\newcommand{\R}{\field{R}}
\renewcommand{\H}{\mathbb{H}}
\newtheorem{theorem}{\textbf{Theorem}}[section]
\newtheorem{corollary}[theorem]{\textbf{Corollary}}
\newtheorem{proposition}[theorem]{\textbf{Proposition}}
\newtheorem{definition}[theorem]{\textbf{Definition}}
\newtheorem*{example}{Example}
\newtheorem*{remark}{Remark}
\renewenvironment{proof}[1][Proof]{\begin{trivlist}
\item[\hskip \labelsep {\bfseries #1:}]}{\qed\end{trivlist}}
\newcommand{\bea}{\begin{eqnarray}} 
\newcommand{\eea}{\end{eqnarray}} 
\newcommand{\be}{\begin{equation}} 
\newcommand{\ee}{\end{equation}} 
\newcommand{\benn}{\begin{equation*}} 
\newcommand{\eenn}{\end{equation*}} 
\thanks{The author's research is supported by the DFG-Graduiertenkolleg 1269 ''Globale Strukturen in Geometrie
und Analysis'' \\ This paper is part of the author's PhD-thesis, written under the supervision of Prof. Dr. K.
Bringmann at the university of Cologne.}
\title[]{Asymptotics of higher Order Ospt-functions for Overpartitions}
\author{Jose Miguel Zapata Rolon}
\address{ Mathematical Institute\\University of
Cologne\\ Weyertal 86-90 \\ 50931 Cologne \\Germany}
\email{rzapata@math.uni-koeln.de}
\begin{document}

\begin{abstract}
In this paper we obtain asymptotic formulas for the positive crank and rank moments for overpartitions. Moreover, we show that crank and rank moments are asymptotically equal while the difference is asymptotically positive. This indicates that there exist analogous higher ospt-functions for overpartitions, which we define. \end{abstract}

\maketitle
\section{Introduction}
A partition $\lambda$ of $n$ is a non-increasing sequence of positive integers whose sum is $n$. The theory of partitions gives many impressive examples of connections between automorphic forms and combinatorics. Ramanujan proved the following congruences  for $p\left(n\right)$, the counting function for the number of partitions of $n$ \cite{Ram2},
\begin{eqnarray}\label{racon}
 p(5n+4)&\equiv & 0 \pmod{5}, \nonumber\\
 p(7n+5)&\equiv & 0  \pmod{7} , \\
 p(11n+6)&\equiv& 0  \pmod{11} \nonumber.
\end{eqnarray}
These congruences are precursors for the general theory of Hecke operators on $l$-adic modular forms. Ramanujan's proof gave little insight into the combinatorial nature of these congruences. Therefore Dyson invented the concept of the rank and conjectured the existence of the crank \cite{Dy1} to explain the congruences from a combinatorial point of view. The crank was finally constructed by Garvan and Andrews \cite{AG1}. The rank is defined
in the following way:
\begin{equation}
\textrm{rank}(\lambda) := \textrm{largest part of}\, \lambda - \textrm{number of parts of} \, \lambda ,
\end{equation}
and the crank as: 
\begin{equation}
\textrm{crank}(\lambda):=\begin{cases}  \textrm{largest part of}\, \lambda \qquad &\textrm{if}\, o(\lambda) = 0, \\
					\mu(\lambda) - o(\lambda)	   \qquad &\textrm{if}\, o(\lambda) > 0, 	
                         \end{cases}
\end{equation}
where  $o(\lambda)$ is the number of ones in a partition $\lambda$ and $\mu(\lambda)$ is the number of parts larger than $o(\lambda)$.
The rank explains the first two congruences in (\ref{racon}) \cite{AS-D}
and the crank all three simultaneously \cite{AG1} by grouping together the different partitions into equally sized congruence classes of rank or crank values. There are many other congruences for the partition function, however the author showed that theses congruences (\ref{racon}) are the only ones that can be explained by the crank \cite{Za}. 
The study of the generating functions of these combinatorial objects led to many new examples of automorphic forms like harmonic weak Maass forms (see e.g. \cite{BO}). Another way to build out of the partition statistics examples of new functions is to use rank and crank moments.
\begin{definition}
Let $M(m,n) \,(resp. \, N(m,n))$ be the number of partitions of $n$ with crank  (resp. rank) $m$ . Then for $r \in \N_0$ we define the $r$-th crank (resp. rank)-moment $M_r \,(resp. \, N_r)$ as
\begin{align*}
M_{r}\left(n\right) := &\sum_{m \in \Z} m^r M(m,n), \\
N_{r}\left(n\right) := &\sum_{m \in \Z} m^r N(m,n).
\end{align*}
\end{definition} 
It can be easily seen that the odd moments vanish due to the symmetry $M(-m,n)= M(m,n)\,(\text{resp.}\, N(-m,n)= N(m,n))$. 

A natural generalization of the theory of partitions is the theory of overpartitions \cite{CL}. An overpartition of $n$ is a non-increasing sequence of positive integers whose sum is $n$ with the extra condition that you may overline the first occurrence of any number.
For example, the 8 overpartitions of 3 are:
\begin{align*}
   3,\, \overline{3},\, 2 + 1,\, \overline{2}+1,\, 2+\overline{1},\, \overline{2}+\overline{1},\, 1+1+1,1+1+ \overline{1}. 
\end{align*}
It is also possible to define ranks and cranks of overpartitions. There are several different possibilities. We restrict in our research to Dyson's rank defined in the same way as for partitions. The first residual crank for overpartitions is obtained by taking the crank of the subpartition consisting of the non-overlined parts. 
We denote the number of overpartitions of $n$ with rank $m$ by $\overline{N}(m,n)$  and the number of overpartitions of $n$ with crank $m$ by $\overline{M}(m,n)$. These functions fulfill again the equations $\overline{M}(-m,n)= \overline{M}(m,n)$ and $\overline{N}(-m,n)= \overline{N}(m,n)$.
Therefore, to study non-trivial odd moments we consider positive moments as sums over the natural numbers.
\begin{definition} Suppose that $r \in \N$. The $r$-th positive crank (resp. rank) moment is defined by
\begin{align*}
\overline{M}_{r}^{+}\left(N\right) := & \sum_{m \in \N} m^r \overline{M}\left(m,N\right), \\
\overline{N}_{r}^{+}\left(N\right):= & \sum_{m \in \N}  m^r \overline{N}\left(m,N\right).
\end{align*}
\end{definition}
The technical problem that arises with positive moments is that a certain symmetry of $\Z$ can not be used. This will lead to the introduction of false theta series that make computations much harder, because this class of functions is not modular.
Modularity is an important tool to compute Fourier coefficients, however we will overcome this problem by using Wright's Circle Method \cite{W}. 

\begin{remark} 
If the index is even we have the following equation
\begin{align*}
\overline{M}_{2r}^{+}\left(N\right)&= \frac{1}{2}\overline{M}_{2r}\left(N\right), \\
\overline{N}_{2r}^{+}\left(N\right)&= \frac{1}{2}\overline{N}_{2r}\left(N\right).
\end{align*} 
So for even index we get the usual $k$-moments.
\end{remark}

In this paper we prove the following statement
\begin{theorem}\label{th1}
 Let $ r \in \N $. Then as $N \rightarrow \infty$, we have
\begin{itemize}
 \item[(i)]
\benn
\overline{M}_r^+(N) \sim \overline{N}_r^+(N) \sim \gamma_r N^{\frac{r}{2}-1} e^{\pi\sqrt{N}},
\eenn
where
\benn
\gamma_r= r!\zeta(r)(1-2^{1-r})\pi^{-r}2^{r-3}.
\eenn
Here $\zeta(r)$ denotes the Riemann $\zeta$-function. 
\item[(ii)]
As $N \rightarrow \infty$, we have
\benn
\overline{M}_r^+(N) - \overline{N}_r^+(N) \sim \delta_r N^{\frac{r}{2}-\frac{3}{2}} e^{\pi\sqrt{N}},
\eenn
where
\benn
\delta_r= r!\pi^{-r+1}2^{r-\frac{7}{2}}\left(\zeta(r-2)(1-2^{3-r})+ \frac{1}{2}\zeta(r-1)(1-2^{2-r})\right).
\eenn
\end{itemize}
\end{theorem}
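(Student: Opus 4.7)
The plan is to apply Wright's Circle Method to the generating functions of the positive moments. First, I would derive explicit $q$-series expressions for
\benn
\mathcal{M}_r^+(q):=\sum_{N\geq 0}\overline{M}_r^+(N)q^N,\qquad \mathcal{N}_r^+(q):=\sum_{N\geq 0}\overline{N}_r^+(N)q^N,
\eenn
starting from the two-variable generating functions $\overline{C}(z,q):=\sum_{m,n}\overline{M}(m,n)z^mq^n$ and $\overline{R}(z,q):=\sum_{m,n}\overline{N}(m,n)z^mq^n$, each of the shape $(-q;q)_\infty/(q;q)_\infty$ times a Lambert-type factor. Applying the operator $(z\partial_z)^r$ inserts the factor $m^r$; projecting onto positive Fourier modes $m\geq 1$ then extracts the positive moment. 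For even $r$ this projection is simply half of the full series by the symmetry $m\mapsto -m$; for odd $r$ one is left with a false theta series, which is the source of the technical difficulty since such functions are not modular. In all cases the result factors as
\benn
\mathcal{M}_r^+(q)=\frac{(-q;q)_\infty}{(q;q)_\infty}A_r(q),\qquad \mathcal{N}_r^+(q)=\frac{(-q;q)_\infty}{(q;q)_\infty}B_r(q),
\eenn
with $A_r,B_r$ explicit Lambert/false-theta series.

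Second, I would determine the singular behavior as $q=e^{-t}\to 1^{-}$. The modular factor admits the classical asymptotic $\tfrac{1}{\sqrt{2}}\exp(\pi^2/(4t))(1+O(t))$ from the modular transformation of the Dedekind $\eta$-function. For $A_r(q)$ and $B_r(q)$ I would run an Euler-Maclaurin expansion to obtain the local shape $t^{-r}\bigl(a_r+b_r t+O(t^2)\bigr)$, identifying the coefficients via Mellin transforms with the combinations $\zeta(r)(1-2^{1-r})$, $\zeta(r-1)(1-2^{2-r})$ and $\zeta(r-2)(1-2^{3-r})$ that appear in $\gamma_r$ and $\delta_r$. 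The crucial observation is that the leading coefficients of $A_r$ and $B_r$ agree, while the subleading coefficients differ.

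Third, by Cauchy's formula on the circle $|q|=e^{-\beta}$ with $\beta=\pi/(2\sqrt{N})$ (the saddle point of $\exp(\pi^2/(4t)+tN)$) I would write
\benn
\overline{M}_r^+(N)=\frac{1}{2\pi i}\oint\frac{\mathcal{M}_r^+(q)}{q^{N+1}}\,dq,
\eenn
and split into a major arc near $q=1$ and a minor arc. On the major arc, inserting the Step~2 expansion and evaluating via the standard $I$-Bessel-type identity $\frac{1}{2\pi i}\int t^{-s}\exp(\pi^2/(4t)+tN)\,dt\sim c_s\,N^{(s-1)/2}e^{\pi\sqrt{N}}$ produces the main term $\gamma_r N^{r/2-1}e^{\pi\sqrt{N}}$ needed for part~(i), and, from the subleading term, the correction $\delta_r N^{r/2-3/2}e^{\pi\sqrt{N}}$ for part~(ii). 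On the minor arc, standard Euler-product estimates for $(-q;q)_\infty/(q;q)_\infty$ together with trivial bounds on the Lambert-type factors yield a contribution exponentially smaller than $e^{\pi\sqrt{N}}$.

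The principal obstacle is Step~2, the handling of $A_r$ and $B_r$. Because these are false theta series they admit no modular transformation; their asymptotics as $t\to 0$ in a sector of the right half $t$-plane must be extracted by hand through a careful Euler-Maclaurin (or Mellin-Barnes) argument, with error terms controlled uniformly in the sector so as to validate the circle-method estimates of Step~3. A second delicate point, specific to part~(ii), is that the leading contribution of $\mathcal{M}_r^+-\mathcal{N}_r^+$ emerges only after cancellation of the dominant term, so the next-order coefficients of both $A_r$ and $B_r$ must be computed with enough precision to recover the exact combination $\zeta(r-2)(1-2^{3-r})+\tfrac{1}{2}\zeta(r-1)(1-2^{2-r})$ appearing in $\delta_r$.
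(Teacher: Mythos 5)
Your plan is correct in outline and follows the same architecture as the paper's proof: factor out the modular piece $(-q)_{\infty}/(q)_{\infty}$, obtain a two-term local expansion of the remaining Lambert/false-theta factor as $q\to 1$ (the second term being indispensable for (ii) because the leading terms cancel), and then run Wright's circle method with the saddle $y=\pi/(2\pi\cdot 2\sqrt N)$, a major arc evaluated by an $I$-Bessel integral and an exponentially subdominant minor arc. Two genuine differences are worth noting. First, the paper does not apply $(z\partial_z)^r$ directly: it routes through the \emph{symmetrized} positive moments $\overline{\mu}_r^+,\overline{\eta}_r^+$ (with binomial weights), whose generating functions are single clean series $\sum_{n\ge1}(-1)^{n+1}q^{n^2/2+(\frac r2+\rho_C)n}(1-q^n)^{-r}$ and the analogous rank series with an extra $(1+q^n)^{-1}$; the ordinary moments are recovered from $\overline{M}_r^+=r!\,\overline{\mu}_r^+ + \sum_{l<r}a_l\overline{\mu}_l^+$, the lower-order terms being absorbed into the error. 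Your direct $(z\partial_z)^r$ route yields a linear combination of such series and is equivalent but computationally messier. Second, for the local expansion the paper uses a Mittag--Leffler partial-fraction decomposition of $q^{rn/2}/(1-q^n)^r$ and of $(1+q^n)^{-1}$, followed by Taylor expansion of the resulting partial theta functions $g_j(\tau)=\sum_n(-1)^{n+1}n^{-j}q^{n^2+\rho_R n}$ at $\tau=0$, rather than Euler--Maclaurin/Mellin; your Mellin route is a legitimate alternative and in fact more systematic, since the relevant Dirichlet series is the entire function $(1-2^{1-s})\zeta(s)$, so no pole obstructs the contour shift. One caveat you should address explicitly: for small $r$ the coefficients involve $\zeta(r-1)$ and $\zeta(r-2)$ only via analytic continuation, and the naive convergence/Taylor arguments degenerate; the paper isolates $r=1$, $r=2$ and $3\le r\le 5$ and treats them by separate arguments, so your uniform-in-sector error analysis must be checked to survive in that range.
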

\begin{remark}
 Note that we will use the analytic continuation of the Riemann $\zeta$-function for small values of $r$. That is the reason why the proof for small $r$ differ.
\end{remark}
To state the corollary we introduce the following quantity
\begin{definition}
Let $ r \geq 1 $. Then we define the $r$-th $ospt$-function for overpartitions as
$$
\overline{ospt}_r\left(N\right):= \overline{M}_r^+\left(N\right) - \overline{N}_r^+\left(N\right).
$$
\end{definition}
From the previous theorem, we obtain directly the following result:
\begin{corollary}\label{co1}
 For all $r \in \N$ there exists a $n_r \in \N$ such that if $N \geq n_r $, then
\benn
\overline{M}_r^+(N) - \overline{N}_r^+(N) > 0,
\eenn
so moreover for $r \geq 1$ and $N \geq n_r$ we have
$$
\overline{ospt}_{r}(N)> 0.
$$
\end{corollary}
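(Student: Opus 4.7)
The plan is to derive Corollary \ref{co1} as an immediate consequence of part (ii) of Theorem \ref{th1}. That asymptotic gives
$$
\frac{\overline{M}_r^+(N) - \overline{N}_r^+(N)}{N^{r/2-3/2}\, e^{\pi\sqrt{N}}} \longrightarrow \delta_r
$$
as $N \to \infty$, so once I show $\delta_r > 0$, the numerator must be strictly positive for every $N$ beyond some threshold $n_r \in \N$, which will be the $n_r$ required by the statement. The second assertion, $\overline{ospt}_{r}(N)>0$, then follows at once from the definition of $\overline{ospt}_r$.

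Accordingly, the only substantive step is to verify that
$$
\delta_r = r!\,\pi^{-r+1}\,2^{r-7/2}\left(\zeta(r-2)(1-2^{3-r}) + \tfrac{1}{2}\zeta(r-1)(1-2^{2-r})\right)
$$
is strictly positive for every $r \in \N$. For $r \geq 4$ this is transparent: both $\zeta(r-2)$ and $\zeta(r-1)$ are positive by convergence of the defining Dirichlet series, and the factors $1-2^{3-r}$ and $1-2^{2-r}$ are positive as well, so $\delta_r$ is a product of manifestly positive quantities. For $r \in \{1,2,3\}$ one has to invoke the analytic continuation of $\zeta$, as the remark following Theorem \ref{th1} already anticipates.

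The one delicate point will be the cases $r = 2$ and $r = 3$, where one of the products $\zeta(r-1)(1-2^{2-r})$ or $\zeta(r-2)(1-2^{3-r})$ formally reads $\zeta(1)\cdot 0$. I would resolve this indeterminate form by combining the Laurent expansion $\zeta(s) = (s-1)^{-1} + \gamma + O(s-1)$ with the Taylor expansion $1-2^{1-s} = (s-1)\log 2 + O((s-1)^2)$, so that the product is assigned the finite limit $\log 2$. Substituting the classical values $\zeta(-1) = -1/12$, $\zeta(0) = -1/2$, $\zeta(2) = \pi^2/6$ together with this limit where needed, one then checks positivity of $\delta_r$ by a short arithmetic computation in each of the three small cases. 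This careful handling of the pole of $\zeta$ is the only mild obstacle; everything else is purely formal.
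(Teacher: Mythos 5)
Your proposal is correct, and for $r\geq 4$ it coincides with the paper's argument (positivity of the leading constant $\delta_r$ in Theorem \ref{th1}(ii)). Where you diverge is in the small cases. The paper does not attempt to evaluate $\delta_r$ for $r=1,2$ at all: it handles $r=1$ by citing \cite{ABKO} and $r=2$ by citing \cite{BLO}, both of which in fact give the stronger statement that the inequality holds for \emph{all} $N$, not merely eventually; only for $r\geq 3$ does the paper argue via positivity of $\delta_r$. You instead push the asymptotic argument uniformly through $r=1,2,3$, which requires exactly the care you identify: interpreting the indeterminate products $\zeta(1)\bigl(1-2^{0}\bigr)$ appearing in $\delta_2$ and $\delta_3$ as the limit $\lim_{s\to 1}(1-2^{1-s})\zeta(s)=\log 2$, and then checking, e.g., $\delta_2 = 2\pi^{-1}2^{-3/2}\bigl(\tfrac12+\tfrac12\log 2\bigr)>0$ and $\delta_3 = 6\pi^{-2}2^{-1/2}\bigl(\log 2+\tfrac{\pi^2}{24}\bigr)>0$. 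This is arguably more careful than the paper, which asserts positivity of the constant for $r\geq 3$ without addressing the indeterminate form at $r=3$. The one caveat is that your uniform treatment leans on Theorem \ref{th1}(ii) being valid for $r=1,2$, which the paper itself only establishes by deferring to \cite{KKS} and to methods of \cite{B}; the paper's citations for the corollary at $r=1,2$ sidestep that dependence and buy an all-$N$ inequality, while your route buys a self-contained, uniform deduction from the stated asymptotics.
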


\begin{remark}
Computer experiments show that the inequality $\overline{M}^+_r(N) - \overline{N}^+_r(N)>0$ hold for every $N,r \geq 1$. Therefore it would be worthwhile to investigate these new-defined higher order $ospt$-functions and to show what they actually count. 
A recent paper shows that the inequality hold for general $N$ if $r$ is even \cite{J-S}.
\end{remark}

\begin{remark}
 In \cite{ABKO} it is shown that $\overline{M}_1^+(N) - \overline{N}_1^+(N) > 0$ for all $N$. The authors obtain their result by explicitly computing the generating functions of the first rank and crank moment. 
\end{remark}

The remainder of the paper is organized as follows: In Section 2 we will introduce the symmetrized crank and rank moments and derive their generating functions. Next, in Section 3, we compute the asymptotic behavior of the generating functions using Mittag-Leffler decomposition and Taylor expansions.  In Section 4 we use Wright's Circle Method
to obtain asymptotics of the Fourier coefficients of the symmetrized moment generating function, completing the proof of Theorem \ref{th1}. 
\section{Constructing generating functions}
The generating functions of the symmetrized moments are much more convenient to work with than with the combinatorial definition made in the introduction. The positive symmetrized moments are defined in the following way.
\begin{align*}
\overline{\mu}_{r}^{+}\left(n\right) := & \sum_{m \in \N}^{} \binom{m + \lfloor \frac{r-1}{2} \rfloor}{r}\overline{M}(m,n), \nonumber \\
\overline{\eta}_{r}^{+}\left(n\right):= & \sum_{m \in \N} \binom{m + \lfloor \frac{r-1}{2} \rfloor}{r}\overline{N}(m,n). \nonumber
\end{align*}
We prove our main result by computing the asymptotics of the symmetrized moments which are related to the positive $r$-moments by the following equation \cite{G}:
\bea \label{smum}
\overline{M}_{r}^{+}(n) = r! \overline{\mu}_{r}^{+}(n)  + \sum_{l=0}^{r-1} a_l\overline{\mu}_{l}^{+}(n), \\ 
\overline{N}_{r}^{+}(n) = r! \overline{\eta}_{r}^{+}(n) + \sum_{l=0}^{r-1} a_l\overline{\eta}_{l}^{+}(n), \nonumber 
\eea
where $a_l$ are numbers and due to this formula will have the same asymptotics. 
The generating functions of the different moments are calculated by using differential operators for the corresponding generating functions. 
These generating functions are given by the following expressions (see \cite{BLO, L}):
\begin{eqnarray*}
 \overline{C}(z;q) := \sum_{m \in \Z}\sum_{n \in \N} \overline{M}(m,n) z^m q^n = \frac{(-q)_{\infty}(q)_{\infty}}{(zq)_{\infty}(z^{-1}q)_{\infty}} \\
 \overline{R}(z;q) := \sum_{m \in \Z}\sum_{n \in \N} \overline{N}(m,n) z^m q^n = \sum_{n = 0}^{\infty} \frac{(-1)_{n}q^{\frac{n(n+1)}{2}}}{(zq)_n (z^{-1}q)_n} 
\end{eqnarray*}
We introduce the operator $P$ that projects the function that has negative exponents in the $z$-expansion to the same expansion but keep just terms where the exponent of $z$ is positive. Furthermore, we introduce the evaluation operator $\varepsilon_{\alpha}: f( \bullet, q) \rightarrow f(\alpha,q) $.
For the usual positive $k$-moments we use the following operator:
\benn
\mathcal{M}_k := \varepsilon_{1} \circ P  \circ \left(z\dfrac{\partial}{\partial z}\right)^k .
\eenn
Finally, for the symmetrized moments (which will be the interesting function throughout this paper) we use:
\be \label{op}
\mathcal{SM}_k :=\varepsilon_{1} \circ P \circ \frac{1}{r!}\dfrac{\partial^{r}}{\partial z^{r} } z^{m + \lfloor \frac{r-1}{2}\rfloor} .
\ee
\begin{remark}
We do not address the use of the usual moment operator here.
\end{remark}
To give uniform formulas for general $r$, we define 
\begin{align*}
 \rho_C(r):= \rho_C := \begin{cases}
           0     \,\,\, \textrm{if r is odd}, \\
           \frac{1}{2}\,\,\, \textrm{otherwise},
          \end{cases}
\end{align*}
and
\begin{align*}
\rho_R(r):= \rho_R := \begin{cases}
           \frac{1}{2}   \,\,\, \textrm{if r is odd} ,\\
           1 \,\,\, \textrm{otherwise}.
          \end{cases}
\end{align*}
We obtain the following result:
\begin{proposition}
The generating functions of the symmetrized moments are
\begin{eqnarray*}
\mathcal{SC}_r(q):= \sum_{n \geq 0} \overline{\mu}_{r}^{+}(n)q^n  = \mathcal{SM}_r \overline{C}(z;q) =  \frac{(-q)_{\infty}}{(q)_{\infty}}\sum_{n\geq 1} \frac{(-1)^{n+1}q^{\frac{n^2}{2}+\left(\frac{r}{2}+\rho_{C}\right)n}}{\left(1-q^n\right)^r},\\
\mathcal{SR}_r(q):= \sum_{n \geq 0} \overline{\eta}_{r}^{+}(n)q^n = \mathcal{SM}_r \overline{R}(z;q) = 2\frac{(-q)_{\infty}}{(q)_{\infty}}\sum_{n\geq 1} \frac{(-1)^{n+1}q^{n^2 +\left(\frac{r}{2}+\rho_{R}\right)n}}{\left(1+q^n\right)\left(1-q^n\right)^r}.\\
\end{eqnarray*}
\end{proposition}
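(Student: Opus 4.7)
The strategy is to apply the operator $\mathcal{SM}_r$ from (\ref{op}) to each of the generating functions $\overline{C}(z;q)$ and $\overline{R}(z;q)$, after first rewriting them as Appell--Lerch (Lambert-type) sums over $n \in \Z$ whose positive-$z$-power part can be isolated cleanly.

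The first step is to produce the Appell--Lerch decomposition. For the crank I would use a theta-quotient identity of the form
$$\frac{(q)_\infty^{2}}{(zq)_\infty (z^{-1}q)_\infty}=\sum_{n\in\Z}\frac{(-1)^n q^{n(n-1)/2}\,z^n}{1-zq^n}$$
(a Kac--Wakimoto/Choi-type identity), so that after multiplying by $(-q)_\infty/(q)_\infty$ the function $\overline{C}(z;q)$ becomes a Lambert-type series in $z$. For the rank I would invoke the analogous Bailey/Lovejoy--Osburn-type identity that converts the Hecke-type series $\overline{R}(z;q)$ into an Appell--Lerch sum carrying an additional factor $1/(1+q^n)$ and a leading $2$ arising from $(-1)_n=2(-q)_{n-1}$.

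The second step is extracting the positive $z$-part. The term at index $n\geq 1$ in each Appell--Lerch sum expands for $|z|<1$ as a geometric series in $z$ with only non-negative powers of $z$, while the terms at $n\leq 0$ expand, after pulling out $z^{-1}q^{|n|}$, as series in $z^{-1}$ only; applying $P$ therefore retains exactly the $n\geq 1$ contributions. The third step is to apply the symmetrized derivative using the elementary binomial identity
$$\sum_{m\geq 1}\binom{m+a}{r}x^{m}=\frac{x^{r-a}}{(1-x)^{r+1}}\qquad(0\leq a<r)$$
with $a=\lfloor(r-1)/2\rfloor$ and $x=q^n$. Combined with the Appell--Lerch coefficient $(-1)^n q^{n(n-1)/2}$, with a compensating $(1-q^n)$ factor coming from the $z^n(1-zq^n)^{-1}$ in the decomposition, and (for the rank) the extra $(1+q^n)^{-1}$, this yields the $q$-exponent $\tfrac{n^2}{2}+\bigl(\tfrac{r}{2}+\rho_C\bigr)n$ with denominator $(1-q^n)^r$ in the crank case, and correspondingly $n^2+\bigl(\tfrac{r}{2}+\rho_R\bigr)n$ with $(1+q^n)(1-q^n)^r$ in the rank case. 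The parity-dependent shifts $\rho_C,\rho_R$ emerge from the parity of $\lfloor(r-1)/2\rfloor$ entering through the exponent $r-a$ of the binomial identity.

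The main technical obstacle is the first step: selecting the correct Appell--Lerch identity (with the right theta-coefficient and compensating factor) so that the mechanical calculation of the last step produces exactly the exponents $\tfrac{r}{2}+\rho_C$ and $\tfrac{r}{2}+\rho_R$, together with the correct powers of $(1-q^n)$. For the rank side in particular, the passage from the Hecke-type series to the Appell--Lerch form requires a Bailey pair argument. Once the correct decomposition is in place, the remaining two steps are routine formal power series manipulations.
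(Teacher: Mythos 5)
Your overall route is the right one --- it is the standard template of \cite{BM} for ordinary partitions, which is what the paper silently relies on when it calls the proof ``a straightforward computation'' --- and your second and third steps check out: only the $n\geq 1$ terms of a Lambert-type sum carry positive powers of $z$, and the identity $\sum_{m\geq 1}\binom{m+a}{r}x^{m}=x^{r-a}(1-x)^{-r-1}$ (valid for $0\leq a\leq r-1$, so in particular for $a=\lfloor(r-1)/2\rfloor$) is exactly what converts the geometric series in $q^{mn}$ into $q^{(r-a)n}(1-q^n)^{-r-1}$, the parity of $a$ accounting for $\rho_C$ and $\rho_R$. The genuine gap is the one you flag yourself: the decomposition in step one is wrong as written. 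The sum $\sum_{n\in\Z}(-1)^nq^{n(n-1)/2}z^n(1-zq^n)^{-1}$ has $z$-constant term $\tfrac{1}{1-z}-z+O(q)$ rather than $1+O(q)$, so it cannot equal $(q)_\infty^2/((zq)_\infty(z^{-1}q)_\infty)$, and feeding it into your step three would not produce the claimed exponents. The identity you need is the classical
\begin{equation*}
\frac{(q)_\infty^{2}}{(zq)_\infty (z^{-1}q)_\infty}=(1-z)\sum_{n\in\Z}\frac{(-1)^n q^{n(n+1)/2}}{1-zq^n},
\end{equation*}
whose prefactor $(1-z)$ is precisely the ``compensating factor'' you anticipate: the coefficient of $z^m$ ($m\geq1$) in the $n$-th term is $(-1)^nq^{n(n+1)/2}\bigl(q^{mn}-q^{(m-1)n}\bigr)$, and after summing against $\binom{m+a}{r}$ the resulting factor $(1-q^{-n})$ cancels one power of $(1-q^n)$, flips the sign to $(-1)^{n+1}$, and shifts the exponent by $-n$, yielding exactly $q^{\frac{n^2}{2}+(\frac{r}{2}+\rho_C)n}(1-q^n)^{-r}$.

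For the rank you do not need a fresh Bailey-pair argument: the required Appell--Lerch form is already in \cite{L} (and is the form used in \cite{BLO}), namely
\begin{equation*}
\overline{R}(z;q)=\frac{(-q)_\infty}{(q)_\infty}\Bigl(1+2\sum_{n\geq1}\frac{(1-z)(1-z^{-1})(-1)^nq^{n^2+n}}{(1-zq^n)(1-z^{-1}q^n)}\Bigr).
\end{equation*}
A partial-fraction decomposition in $z$ shows that the part of the $n$-th summand with positive powers of $z$ is $\frac{-q^{-n}(1-q^n)}{1+q^n}\sum_{m\geq1}z^mq^{mn}$, so the factor $(1+q^n)^{-1}$ and the single compensating $(1-q^n)$ appear automatically, and your step three then gives $2(-1)^{n+1}q^{n^2+(\frac{r}{2}+\rho_R)n}(1+q^n)^{-1}(1-q^n)^{-r}$ as claimed. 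In short: correct strategy and correct closing computations, but the one identity you defer is misquoted, and since the entire proof hinges on it you should replace it by the two displayed identities above.
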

 It is a straightforward computation to use the differential operator on the corresponding generating function, so we skip the proof. 
\begin{example}
 Here we give two examples of $q$-series that are generating functions of certain positive symmetrized rank/crank moments. 
\benn
\mathcal{SR}_3(q)= \frac{(-q)_{\infty}}{(q)_{\infty}}\sum_{n\geq 1} \frac{(-1)^{n+1}q^{\frac{n^2 + 3n}{2}}}{\left(1-q^n\right)^3} =  2 q^3 + 8 q^4 + 24 q^5 + 60 q^6 + 134 q^7 + ...
\eenn
and 
\benn
\mathcal{SC}_4(q)=  2\frac{(-q)_{\infty}}{(q)_{\infty}}\sum_{n\geq 1} \frac{(-1)^{n+1}q^{n^2 + 2n}}{\left(1+q^n\right)\left(1-q^n\right)^4} = q^2 + 6 q^3 + 22 q^4 + 63 q^5  + 159 q^6  + 358 q^7 + ...
\eenn
\end{example}
\section{Asymptotic expansion of the generating functions}
In this section, we investigate the asymptotic behavior of the generating functions of the symmetrized moments near an essential singularity on the unit circle.
To do so, we fix the notation as follows. Let $q := e^{2\pi i \tau}$, where $\tau = x + iy$, with $x,y \in \R$ and $y>0$. The generating functions $\mathcal{SC}_r(q), \mathcal{SR}_r(q)$ have exponential singularities coming from the factor
$ (-q)_{\infty}/(q)_{\infty}$ that mainly control the asymptotic behavior. The dominant pole is at $q=1$, so we have to deduce the asymptotic behavior of the rest of the function near this point.
In general, we will show that it has polynomial growth. 
\subsection{Bounds near the dominant pole}
We prove the following result strongly related to the asymptotics obtained in \cite{BM}:
\begin{proposition}
Let 
$$ S_r(q) := \sum_{n\geq1}\frac{(-1)^{n+1}q^{\frac{n^2}{2} + \left(\frac{r}{2}+\rho_{C}\right)n}}{\left(1-q^n\right)^r}, $$
and 
$$ \widetilde{S}_r(q):= \sum_{n\geq1}\frac{(-1)^{n+1}q^{n^2 + \left(\frac{r}{2}+\rho_{R}\right)n}}{\left(1-q^n\right)^r \left(1+q^n\right)}. $$
Letting $|x|\leq y := \frac{1}{4 \sqrt{N}}$, we get for $N \rightarrow \infty $
$$
(1) \,\,S_r(q) - c_r \left(-2 \pi i \tau\right)^{-r} - d_r \left(-2 \pi i \tau \right)^{-r +1} \ll N^{\frac{r}{2}-1},
$$
and
$$
(2) \,\, \widetilde{S}_r(q) - \frac{c_r}{2} \left(-2 \pi i \tau\right)^{-r} - \tilde{d}_r\left(-2 \pi i \tau\right)^{-r+1}  \ll N^{\frac{r}{2}-1},
$$
where the constants are given by:
\begin{align*}
c_r &= \zeta(r)\left(1 -2^{1-r}\right), \\
d_r &=  -\left( \frac{\zeta\left(r-2\right)\left(1-2^{3-r}\right)}{2} +\rho_{C}\zeta(r-1)\left(1 - 2^{1-r}\right)\right) ,\\
d_r^{\prime} &= -\left(\zeta\left(r-2\right)\left(1-2^{3-r}\right) +\frac{\rho_{R}}{2}\zeta(r-1)\left(1 - 2^{1-r}\right)\right) .
\end{align*}
\end{proposition}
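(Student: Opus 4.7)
The plan is to expand each summand of $S_r(q)$ and $\widetilde{S}_r(q)$ as a Laurent series in $w := -2\pi i\tau$ at $w=0$, extract the first two coefficients, and bound the remainder. Since $|x|\le y=1/(4\sqrt{N})$ forces $\re(w)=2\pi y$ and $|\im w|\le\re w$, we have $|w|\asymp y\asymp N^{-1/2}$ and $\re(w)>0$.

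First I would use the two Taylor expansions
\[
\frac{1}{(1-e^{-nw})^r}=\frac{1}{(nw)^r}+\frac{r/2}{(nw)^{r-1}}+O\!\Bigl(\frac{1}{(nw)^{r-2}}\Bigr),\qquad q^{n^2/2+(r/2+\rho_C)n}=1-\tfrac{w}{2}\bigl(n^2+(r+2\rho_C)n\bigr)+O(w^2n^4),
\]
which hold for $n|w|\ll 1$ and $n^2|w|\ll 1$, respectively. Multiplying and collecting in powers of $w$, the $w^{-r}$ coefficient of the $n$th summand equals $n^{-r}$, and (after the $r/2$-contributions from numerator and denominator cancel) the $w^{-r+1}$ coefficient reduces to $-\tfrac{1}{2n^{r-2}}-\tfrac{\rho_C}{n^{r-1}}$. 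Summing against $(-1)^{n+1}$ and invoking $\eta(s):=\sum_{n\ge 1}(-1)^{n+1}n^{-s}=(1-2^{1-s})\zeta(s)$, analytically continued where necessary, then produces exactly $c_r w^{-r}+d_r w^{-r+1}$.

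To control the error I would split the sum at $n_0\asymp|w|^{-1/2}\asymp N^{1/4}$. For $n\le n_0$ both Taylor expansions are absolutely valid; the $w^{-r+2}$-coefficient of the $n$th summand has the shape $\tfrac{1}{8n^{r-4}}+\tfrac{\rho_C}{2n^{r-3}}+O(n^{-r+2})$, whose $(-1)^{n+1}$-sum is a finite constant by analytic continuation of $\eta$, and multiplying by $|w|^{-r+2}\asymp N^{r/2-1}$ yields the advertised bound. For $n>n_0$ the numerator expansion breaks down, but using the identity $(1-q^n)^r=e^{-nrw/2}(2\sinh(nw/2))^r$, the lower bound $|2\sinh(nw/2)|\gtrsim n|w|$ available on the sector $|\im w|\le\re w$, and the Gaussian decay $|q^{n^2/2}|=e^{-\re(w)n^2/2}$, one obtains a tail bounded by $|w|^{-r}\sum_{n>n_0}e^{-cn^2/\sqrt{N}}/n^r$; the rescaling $m:=n/N^{1/4}$ turns this into a truncated Gaussian integral that is also $O(N^{r/2-1})$ (and exponentially small once $n\gg|w|^{-1}$).

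The main obstacle I expect is the intermediate regime $N^{1/4}\lesssim n\lesssim N^{1/2}$, where neither the Taylor expansion of $q^{n^2/2}$ nor any naive triviality bound on $(1-q^n)^{-r}$ is sharp; here one must carefully balance the Gaussian decay against polynomial blow-up, and for small $r$ additionally exploit cancellation from the alternating signs $(-1)^{n+1}$ via Abel summation (which is also the reason the stated constants require analytic continuation of $\eta$ for small $r$). Part (2) follows by an identical argument: the extra factor $(1+q^n)^{-1}=\tfrac12+\tfrac{nw}{4}+O(w^3n^3)$ at $w=0$ halves the leading Laurent coefficient (yielding $c_r/2$), and, combined with the doubled exponent $n^2$ in the $\widetilde{S}_r$-numerator and the substitution $\rho_C\to\rho_R$, recombines the first two Laurent coefficients into the stated $\tilde{d}_r$.
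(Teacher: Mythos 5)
Your route is genuinely different from the paper's. You Taylor-expand each summand directly in $w:=-2\pi i\tau$ and truncate the $n$-sum at $n_0\asymp N^{1/4}$, whereas the paper uses the \emph{exact} Mittag--Leffler partial-fraction identities for $q^{rn/2}(1-q^n)^{-r}$ and $(1+q^n)^{-1}$, valid for every $n$ simultaneously, so the $n$-sum never has to be split: the leading Laurent coefficients come from Taylor-expanding the convergent theta-like sums $g_j(\tau)=\sum_{n\ge1}(-1)^{n+1}n^{-j}q^{n^2+\rho_R n}$ at $\tau=0$, and the pole contributions from $k\neq0$ are bounded termwise via the elementary estimates $(n\tau\pm k)^{-1}\ll k^{-1}$ and $(n\tau+k)^{-1}+(n\tau-k)^{-1}\ll n|\tau|k^{-2}$. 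Your extraction of the two leading coefficients is correct (up to the paper's own internal inconsistency in the power of $2$ attached to $\zeta(r-1)$ between the statement and the proof of the proposition), and your lower bound $|2\sinh(nw/2)|\gg n|w|$ on the sector $|\mathrm{Im}\,w|\le\mathrm{Re}\,w$ is legitimate since $\sinh$ has no zeros there.

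The gap is in the error analysis, and it is the substantive part of the proposition. First, you cannot invoke ``analytic continuation of $\eta$'' to declare that the sum of the order-$w^{-r+2}$ \emph{error terms} is a finite constant: analytic continuation assigns a value to a main term identified by an exact expansion, but truncation errors are actual quantities whose absolute values must be summed, and $\sum_{n\le n_0}n^{4-r}\asymp N^{(5-r)/4}$ for $r<5$; multiplied by $|w|^{-r+2}\asymp N^{(r-2)/2}$ this gives $N^{(r+1)/4}$, which exceeds the target $N^{r/2-1}$ precisely when $r<5$. Second, your tail bound $|w|^{-r}\sum_{n>n_0}n^{-r}e^{-cn^2/\sqrt N}$ also evaluates to $O(N^{(r+1)/4})$ under $m=nN^{-1/4}$, so it too fails the target for $r=3,4$ (e.g.\ for $r=3$ it is $O(N)$ against a target of $O(N^{1/2})$, swamping even the secondary main term). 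You correctly identify that only cancellation from the alternating signs can rescue the range $N^{1/4}\lesssim n\lesssim N^{1/2}$, but you do not carry this out, and it is exactly the point at issue. The paper's Mittag--Leffler decomposition is what buys uniformity in $n$ and avoids the intermediate regime altogether; even so, the paper must treat $3\le r\le5$ separately (deferring to the appendix of \cite{BM}), which confirms that this range cannot be dispatched by the crude bounds as proposed.
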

\begin{proof}
From \cite{BM} we know that $S_r(q)$ obeys the above expression. For $\widetilde{S}_r(q)$, we proceed as follows: We perform a Mittag-Leffler expansion on the two functions
\benn
\frac{q^{\frac{rn}{2}}}{\left(1-q^n\right)^r} = \frac{1}{\left(-2 \pi i n \tau\right)^r} + \sum_{0 < j <r \atop{ j \equiv r \pmod{2}}}  \frac{\alpha_j}{\left(-2 \pi i n \tau\right)^j} + \sum_{0 < j \leq r \atop{ j \equiv r \pmod{2}}}  \frac{\alpha_j}{\left(-2 \pi i \right)^j}\sum_{k \geq 1} \frac{1}{n \tau - k} +\frac{1}{n \tau + k}
\eenn
for $\alpha_j$ some constants and
\benn
\frac{1}{1 + q^n} = \frac{1}{2} +  \frac{-1}{2 \pi i}\sum_{k \geq 1} \frac{1}{n\tau -\left(k - \frac{1}{2}\right)} +\frac{1}{n\tau -\left(k + \frac{1}{2}\right)}.
\eenn
We insert the Mittag-Leffler expansion and split the sum in the following way
\benn
\widetilde{S}_r(q) = \Sigma_1 + \Sigma_2,
\eenn
where 
\benn
\Sigma_1 = \frac{1}{2} \sum_{n \geq 1} \frac{(-1)^{n+1} q^{n^2 + (\frac{r}{2}+\rho_{R})n}}{\left(1-q^n\right)^r},
\eenn
\benn
\Sigma_2 = S_1 + S_2 + S_3, 
\eenn
Here
\begin{align}
S_1 & := \sum_{n \geq 1 } (-1)^{n+1} q^{n^2 + \rho_R n} \frac{1}{(-2\pi i n \tau)^r} \frac{1}{-2 \pi i} \sum_{k \geq 1} \frac{1}{n \tau - \left(k-\frac{1}{2}\right)}+ \frac{1}{n \tau + \left(k-\frac{1}{2}\right)}, \nonumber \\
S_2 & := \sum_{n \geq 1 } (-1)^{n+1} q^{n^2 + \rho_R n} \sum_{0 < j < r \atop{ j \equiv r \pmod{2}}} \frac{\alpha_j}{\left(-2 \pi i n \tau\right)^j} \frac{1}{(-2\pi i)} \sum_{k \geq 1} \frac{1}{n \tau - \left(k-\frac{1}{2}\right)}+ \frac{1}{n \tau + \left(k-\frac{1}{2}\right)},\nonumber \\
\textrm{and} \nonumber \\ 
S_3 & := \sum_{n \geq 1 } (-1)^{n+1} q^{n^2 + \rho_R n} \sum_{0 < j \leq r \atop{ j \equiv r \pmod{2}}} \frac{\alpha_j}{\left(-2 \pi i \right)^j}\sum_{k \geq 1}\frac{1}{n \tau - k}+\frac{1}{n \tau + k} \nonumber\\
 \qquad &  \qquad \times \frac{1}{(-2\pi i)} \sum_{k \geq 1} \frac{1}{n \tau - \left(k-\frac{1}{2}\right)}+ \frac{1}{n \tau + \left(k-\frac{1}{2}\right)}. \nonumber
\end{align}
To proceed, we further define:
\be
g_k(\tau):= \sum_{n \geq 1} (-1)^{n+1} n^{-k} q^{n^2 + \rho_R n}.
\ee
With this, we can rewrite
\begin{align} \label{equ1}
& \Sigma_1 = \frac{1}{2}\frac{1}{\left(-2 \pi i \tau\right)^r}g_r(\tau) +  \frac{1}{2}\sum_{0 <j <r \atop { r \equiv j \pmod{2}}}\frac{\alpha_j}{\left(-2 \pi i \tau\right)^j}g_j(\tau) \nonumber \\ 
\qquad & \qquad + \sum_{n \geq 1} (-1)^{n+1}q^{n^2 + \rho_R} \sum_{ 0 < j \leq r} \frac{\alpha_j}{(-2 \pi i)^j}\sum_{k \geq 1} \frac{1}{(n \tau - k)^j} + \frac{1}{(n \tau + k)^j}.
\end{align}
If $j \geq 1$, then $g_j$ is convergent at $\tau = 0$, with the following value:
\benn
g_{j}(0)= \sum_{n \geq 1} \frac{(-1)^{n+1}}{n^j} = \zeta(j)\left(1 - 2^{1-j} \right).
\eenn
If $j=1$ we have the alternating sum, which is not absolutely convergent, but for $j \geq 2$ $g_j$ is absolutely and uniformly convergent for all $|q| \leq 1$, since we can bound the absolute value of the function by the Riemann zeta function. Next, we use a Taylor expansion to 
obtain lower order asymptotic terms. First, we compute the derivatives of $g_j$
\begin{align*}
 \frac{1}{2 \pi i}\frac{\partial}{\partial \tau} g_j(\tau)& = g_{j-2}(\tau) + \rho_R g_ {j-1}(\tau), \\
 \left(\frac{1}{2 \pi i}\right)^2 \frac{\partial^2}{\partial \tau^2} g_j(\tau) & = g_{j-4} + 2 \rho_R g_{j-3} + \rho_R^2 g_{j-2}. 
\end{align*}
So it is possible to use Taylor's theorem directly for $ j \geq 6 $, giving 
\benn
g_{j}(\tau)-g_{j}(0) - g^{\prime}(0)\tau \ll |\tau|^2 \sup_{\omega \in \H}\left| g_{j}^{\prime \prime}(\omega) \right| \ll |\tau|^2 \ll N^{-1}.
\eenn
We will replace the function $g_j$ by the constant term and the first term in the Taylor expansion by keeping track of the introduced error.
In (\ref{equ1}), we substitute in for the first term the Taylor expansion of $g_j$ and we bound the two other terms in the following way: For the second term in (\ref{equ1}) we bound 
$$
g_j(\tau) \ll \sum_{ n \geq 1} \frac{e^{-Cn^2 y}}{n^j} \ll N^{\frac{1}{4}}
$$
for some constant $C$. For the last term in (\ref{equ1}) we observe that for $ k \in \Z$ with the constraint on $\tau$ made in the proposition we have
\be \label{abs1}
\frac{1}{n \tau + k } \ll \frac{1}{k}.
\ee
Using this we obtain for $j>1$
$$
\sum_{n \geq 1}(-1)^{n+1}q^{n^2 + \rho_R n }\sum_{k \geq 1}\left(\frac{1}{n \tau + k}+\frac{1}{n \tau - k}\right)\ll \sum_{k \geq 1}k^{-j} \sum_{n \geq 1} e^{-2\pi n^2 y} \ll N^{\frac{1}{4}},
$$
by comparing the second sum with an Gaussian integral. \\ 
For $j=1$ we have to use the other bound, with $r \in \Z + \frac{1}{2}\Z$,
\be \label{abs2}
\frac{1}{n \tau + r} + \frac{1}{n \tau - r}  \ll \frac{n |\tau|}{r^2} 
\ee
to obtain
$$
\sum_{n \geq 1}(-1)^{n+1}q^{n^2 + \rho_R n }\sum_{k \geq 1}\left(\frac{1}{n \tau + k}+\frac{1}{n \tau - k}\right)  \ll |\tau| \sum_{n \geq 1} n e^{-2\pi n^2 y} \ll 1.
$$
Inserting the Taylor expansion we obtain: 
$$
\Sigma_1 = \frac{1}{2} \frac{\zeta(r)(1-2^{1-r})}{(-2\pi i \tau)^r} - \frac{1}{2}\left(\frac{\zeta(r-1)(1-2^{2-r}) + \rho_R \zeta(r-2)(1 -2^{3-r})}{(-2\pi i \tau)^{r-1}}\right) + O\left(N^{\frac{r}{2}-1}\right).
$$
For $\Sigma_2$ we have to give bounds for $S_1, S_2$ and $S_3$. We start with $S_3$. Using the bounds (\ref{abs1}) and (\ref{abs2})
we obtain for $j>1$:
$$
S_3 \ll |\tau| \sum_{n \geq 1} n e^{-2 \pi n^2 y} \ll 1
$$
and for $j=1$: 
$$
S_3 \ll |\tau|^2 \sum_{n\geq 1} n^2 e^{-\pi n^2 y}\ll |\tau|^2\sum_{n \geq 1}n^3 e^{-\pi n^2 y} = O(1).
$$
For $S_2$, we use again the trivial bound for $g_j$, except for $j=1$, where we compare the occurring function with a Gaussian integral. We obtain at worst, the following asymptotic term:
$$
S_2 = O\left(N^{\frac{r}{2}-1}\right).
$$
For $S_1$ we calculate
\begin{align*}
 S_1 & = \sum_{n\geq 1} (-1)^{n+1} q^{n^2 + \rho_R n} \frac{1}{(-2\pi i \tau)^r} \frac{1}{(-2\pi i)}\sum_{k \geq 1} \left(\frac{1}{n \tau - \left(k - \frac{1}{2}\right)}+ \frac{1}{n \tau - \left(k + \frac{1}{2}\right)}\right) \\
     & = 2 \sum_{n\geq 1} (-1)^{n+1} q^{n^2 + \rho_R n} \frac{1}{(-2\pi i \tau)^r} \frac{1}{(-2\pi i)}\sum_{k \geq 1}\frac{n \tau}{n^2 \tau^2 - \left(k-\frac{1}{2}\right)^2}\\
     & = - \frac{2}{(-2\pi i \tau)^{r-1} 4 \pi^2} g_{r-1}(\tau)\sum_{k \geq 1} \frac{1}{n^2 \tau^2-\left(k- \frac{1}{2}\right)^2}+ \frac{1}{\left(k- \frac{1}{2}\right)^2}-\frac{1}{\left(k- \frac{1}{2}\right)^2} \\
     & \leq  - \frac{1}{(-2\pi i \tau)^{r-1} 2 \pi^2} g_{r-1}(\tau) \left( -\frac{\pi^2}{2} + \pi^2\right) \\
     & = - \frac{\frac{1}{2}}{(-2\pi i \tau)^{r-1}}\zeta(r-1)\left(1-2^{2-r}\right) + O\left(N^{\frac{r}{2}-1}\right),
\end{align*}
where we used again a Taylor expansion to replace $g_{r-1}$ by a constant up to an error of lower asymptotic order. For the cases $3 \leq r \leq 5$ we use the same argument as in the appendix of \cite{BM}.
\end{proof}
\begin{proposition}\label{mt}
Let $\mathcal{C}_r\,\left( \mathcal{R}_r\right)$ be the $r$-th positive symmetrized crank (rank) moment. Letting  
$|x|\leq y := \frac{1}{4 \sqrt{N}}$, we get for $N \rightarrow \infty $
$$
\mathcal{SC}_r(q) = c_r \sqrt{\frac{-i \tau}{2}}e^{\frac{\pi i }{8 \tau}}\left(-2 \pi i \tau\right)^{-r} + d_r \sqrt{\frac{-i \tau}{2}}e^{\frac{\pi i }{8 \tau}} \left(-2 \pi i \tau \right)^{-r +1} + O\left(N^{\frac{r}{2}-\frac{5}{4}}e^{\frac{\pi \sqrt{N}}{2}}\right)
$$
and
$$
\mathcal{SR}_r(q) = c_r \sqrt{\frac{-i \tau}{2}}e^{\frac{\pi i }{8 \tau}}\left(-2 \pi i \tau\right)^{-r} + \tilde{d}_r \sqrt{\frac{-i \tau}{2}}e^{\frac{\pi i }{8 \tau}}\left(-2 \pi i \tau\right)^{-r+1}  + O\left(N^{\frac{r}{2}-\frac{5}{4}}e^{\frac{\pi \sqrt{N}}{2}}\right)
$$
\end{proposition}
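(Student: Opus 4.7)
The plan is to combine the asymptotic expansions of $S_r$ and $\widetilde{S}_r$ established in the preceding proposition with the standard asymptotic of the infinite product prefactor $(-q)_\infty/(q)_\infty$ near $\tau=0$, throughout the sector $|x|\le y=1/(4\sqrt{N})$.

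First I would rewrite the prefactor in terms of the Dedekind eta function. Using the elementary identity $(-q;q)_\infty=(q^2;q^2)_\infty/(q;q)_\infty$, one has
$$
\frac{(-q)_\infty}{(q)_\infty}=\frac{(q^2;q^2)_\infty}{(q;q)_\infty^2}=\frac{\eta(2\tau)}{\eta(\tau)^2}.
$$
Applying the modular transformation $\eta(-1/w)=\sqrt{-iw}\,\eta(w)$ to each eta factor yields
$$
\frac{\eta(2\tau)}{\eta(\tau)^2}=\sqrt{\frac{-i\tau}{2}}\cdot\frac{\eta(-1/(2\tau))}{\eta(-1/\tau)^2}.
$$
Expanding each $\eta$ on the right as $e^{\pi iw/12}\prod_{n\ge 1}(1-e^{2\pi inw})$, the leading exponentials combine to $e^{\pi i/(8\tau)}$, while the residual infinite products are $1+O(e^{-2\pi\sqrt{N}})$ uniformly, since $\mathrm{Im}(-1/\tau)=y/|\tau|^2\ge 1/(2y)=2\sqrt{N}$ throughout the sector. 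This produces the clean asymptotic
$$
\frac{(-q)_\infty}{(q)_\infty}=\sqrt{\frac{-i\tau}{2}}\,e^{\pi i/(8\tau)}\bigl(1+O(e^{-2\pi\sqrt{N}})\bigr).
$$

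I would then multiply this expansion by the expansion of $S_r(q)$ (respectively by $2\widetilde{S}_r(q)$) supplied by the preceding proposition and collect the leading two terms. For the error estimate, note that $|e^{\pi i/(8\tau)}|\le e^{\pi/(8y)}=e^{\pi\sqrt{N}/2}$ (the maximum being attained at $x=0$) and $|\tau|^{1/2}\ll N^{-1/4}$, so the $O(N^{r/2-1})$ remainder from the preceding proposition contributes at most
$$
\sqrt{|\tau|}\cdot e^{\pi\sqrt{N}/2}\cdot N^{r/2-1}\ll N^{r/2-5/4}e^{\pi\sqrt{N}/2},
$$
which matches the target error. The multiplicative correction $1+O(e^{-2\pi\sqrt{N}})$ from the eta expansion contributes an error of order $N^{r/2-1/4}e^{-3\pi\sqrt{N}/2}$, much smaller than the stated remainder and hence harmless.

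The hard part will be ensuring that the eta product estimate is uniform throughout the whole sector $|x|\le y$, not merely at $x=0$. The essential input is the lower bound $\mathrm{Im}(-1/\tau)\ge 1/(2y)$, which follows from $|\tau|^2\le 2y^2$ on the sector; each infinite product is then controlled by a geometric comparison with $\sum_{n\ge 1}e^{-2\pi n\sqrt{N}}$. Once this uniformity is in hand, the remainder of the argument is bookkeeping: one identifies the coefficient of $(-2\pi i\tau)^{-r}$ as $c_r$ (using that the factor $2$ in the definition of $\mathcal{SR}_r$ cancels the $\tfrac12$ in the leading coefficient of $\widetilde{S}_r$), and similarly reads off the subleading constants $d_r$, $\tilde{d}_r$ already produced in the preceding proposition.
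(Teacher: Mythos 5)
Your proposal is correct and follows essentially the same route as the paper: apply the modular inversion of the eta quotient $\eta(2\tau)/\eta(\tau)^2$ to get the prefactor asymptotic $\sqrt{-i\tau/2}\,e^{\pi i/(8\tau)}\left(1+O\left(e^{-2\pi\sqrt{N}}\right)\right)$, multiply by the expansion of $S_r$ (resp.\ $2\widetilde{S}_r$) from the preceding proposition, and absorb the product of the $O\left(N^{r/2-1}\right)$ remainder with $\sqrt{|\tau|}\,|e^{\pi i/(8\tau)}|\ll N^{-1/4}e^{\pi\sqrt{N}/2}$ into the stated error. You actually supply more detail than the paper does (the explicit eta computation and the uniformity bound $\mathrm{Im}(-1/\tau)\ge 2\sqrt{N}$ over the whole arc), and your version of the prefactor formula correctly includes the factor $e^{\pi i/(8\tau)}$ that is missing from the paper's displayed equation though used in its subsequent computation.
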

\begin{remark}
 We have chosen the variables in such a way that the limit  $N\rightarrow \infty$ corresponds to the essential singularity $q \rightarrow 1$.
\end{remark}
\begin{proof}[Proof of Proposition \ref{mt}]
To begin, we must account for the automorphic factor that controls the exponential singularities. Using the Dedekind inversion formula, we can deduce that:
$$
\frac{(-q)_{\infty}}{(q)_{\infty}} = \sqrt{\frac{- i \tau}{2}}\left(1 + O\left(e^{-2\pi \sqrt{N}}\right)\right).
$$
Making all substitutions we obtain: 
\begin{align*}
 \mathcal{SC}_r (q) & = \frac{(-q)_{\infty}}{(q)_{\infty}} S_r(q) = \sqrt{\frac{- i \tau}{2}}\left(1 + O\left(e^{-2\pi \sqrt{N}}\right)\right) \times \\ & \qquad \qquad \qquad \qquad \left( c_r\left(-2\pi i \tau\right)^{-r} + d_r\left(- 2\pi i\tau\right)^{-r +1} + O\left(N^{\frac{r}{2}-1}\right)\right) \\
& =  c_r \sqrt{\frac{-i \tau}{2}}e^{\frac{\pi i }{8 \tau}}\left(-2 \pi i \tau\right)^{-r} + d_r \sqrt{\frac{-i \tau}{2}}e^{\frac{\pi i }{8 \tau}} \left(-2 \pi i \tau \right)^{-r +1} + O\left(N^{\frac{r}{2}-\frac{5}{4}}e^{\frac{\pi \sqrt{N}}{2}}\right).
\end{align*}
Doing the same for the rank we obtain the desired result.
\end{proof}
\subsection{Bounds away from the dominant pole}
The next step is to make sure that the functions away from the pole $q=1$ can be estimated and bounded. Therefore, we have the following proposition:
\begin{proposition}
 For $y= \frac{1}{4 \sqrt{N}}$ and $y \leq |x| \leq \frac{1}{2}$ we get
\begin{align*}
\mathcal{SC}_r(q) &\ll N^{\frac{r}{2}+ \frac{1}{4}}e^{\frac{\pi \sqrt{N}}{4}}, \\
\mathcal{SR}_r(q) &\ll N^{\frac{r}{2}+ \frac{1}{4}}e^{\frac{\pi \sqrt{N}}{4}}.
\end{align*}
\end{proposition}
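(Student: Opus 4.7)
The plan is to factor $\mathcal{SC}_r(q)$ and $\mathcal{SR}_r(q)$ into the modular prefactor $(-q)_\infty/(q)_\infty$ and the series $S_r(q)$, $\widetilde{S}_r(q)$ from the previous subsection, and bound each piece separately. All of the exponential growth comes from the modular prefactor, so the task reduces to two subproblems: showing that this prefactor is strictly smaller away from $q=1$ than it is at $q=1$, and controlling the remaining series by a polynomial in $N$.

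For the modular prefactor, I would rewrite $(-q)_\infty/(q)_\infty = \eta(2\tau)/\eta(\tau)^2$ and apply the modular inversion formula for $\eta$ to transform the problem to a neighborhood of the cusp $i\infty$ in the variable $-1/\tau$. In that variable the growth of the quotient is governed by $\text{Im}(-1/\tau) = y/|\tau|^2$. For $y = 1/(4\sqrt{N})$ and $y \leq |x| \leq 1/2$ one has $|\tau|^2 \geq 2y^2$, and a direct computation shows that the resulting exponential rate is at most half of the rate at $q=1$. An alternative route is to expand $\log[(-q)_\infty/(q)_\infty] = 2\sum_{k\geq 0}\frac{1}{2k+1}\frac{q^{2k+1}}{1-q^{2k+1}}$ and isolate the $k=0$ contribution $2\,\text{Re}[q/(1-q)]$, whose maximum on each circle $|q| = \text{const}$ is attained at $x=0$ and decays sufficiently as $|x|$ grows. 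Either route gives the pointwise bound
\benn
\left|\frac{(-q)_\infty}{(q)_\infty}\right| \ll e^{\pi\sqrt{N}/4}
\eenn
uniformly on $y \leq |x| \leq 1/2$.

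For the series $S_r(q)$ and $\widetilde{S}_r(q)$, the Gaussian factor $q^{n^2/2}$ (respectively $q^{n^2}$) truncates the sum at $n \ll N^{1/4}$ up to an exponentially small error. In this range the denominators can be bounded using $|1-q^n|^{-1} \ll y^{-1} n^{-1} \ll \sqrt{N}$, which yields $|S_r(q)|, |\widetilde{S}_r(q)| \ll N^{r/2+1/4}$, where the extra $N^{1/4}$ accounts for the length of the effective summation range. Multiplying this polynomial bound with the modular bound above produces the stated estimate for both $\mathcal{SC}_r(q)$ and $\mathcal{SR}_r(q)$.

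The principal obstacle is the first step: obtaining the precise exponential saving, with constant $\pi/4$ in the exponent, uniformly across the annulus $y \leq |x| \leq 1/2$. The Lambert-series manipulation needs care near $x = \pm 1/2$ and for small $n$, while the $\eta$-quotient inversion is cleaner analytically but introduces bookkeeping of multiplier systems and transformation constants. Once the exponential saving is in hand, combining it with the routine polynomial bound on $S_r$ and $\widetilde{S}_r$ completes the proof of the proposition.
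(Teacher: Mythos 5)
Your proposal follows the same route as the paper: factor out the eta-quotient $(-q)_\infty/(q)_\infty$, bound it by $e^{\pi\sqrt{N}/4}$ away from the dominant pole via the modular inversion (using $|x|\geq y$ to get $y/|\tau|^2\leq 1/(2y)$, i.e.\ half the exponential rate at $q=1$), and bound $S_r$ and $\widetilde{S}_r$ polynomially by $N^{r/2+1/4}$ — which is exactly the estimate the paper imports from Bringmann--Mahlburg before multiplying the two bounds together. Your write-up is in fact more detailed than the paper's, which simply cites the earlier work for both ingredients.
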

\begin{proof}
 In \cite{BM} the authors showed that $S_r(q) \ll N^{\frac{r}{2}+\frac{1}{4}}$. From that we obtain:
$$
\left|\mathcal{SC}_r(q) \right|\leq \left|\frac{(-q)_{\infty}}{(q)_{\infty}}\right| \left|S_r(q)\right| \leq N^{\frac{r}{2}+\frac{1}{4}}e^{\frac{\pi \sqrt{N}}{4}}.
$$
The same argument of \cite{BM} can be used to show that $\widetilde{S}_r(q)$ obeys the same bound as $S_r$ away from the dominant pole and therefore $\mathcal{SR}_r(q)$ obeys the same bound as $\mathcal{SC}_r(q)$. We see that this error term can be absorbed into the error of the asymptotics near the essential singularity $q=1$.
\end{proof}
\section{Circle Method}
With these bounds, it is now possible to obtain asymptotic formulas of the symmetrized moments using the Circle Method. We obtain the following result:
\begin{theorem}\label{th2}
For $N \rightarrow \infty$, we have:
\begin{align}
\overline{\mu}_r^+(N) & = \tilde{c}_r N^{\frac{r}{2}-\frac{3}{4}}I_{r-\frac{3}{2}}\left(\pi \sqrt{N}\right) + \tilde{d}_r N^{\frac{r}{2}-\frac{5}{4}}I_{r-\frac{5}{2}}\left(\pi \sqrt{N}\right) + O\left(N^{\frac{r}{2}-\frac{7}{4}e^{\pi \sqrt{N}}}\right) ,\\
\overline{\eta}_r^+(N)& = \tilde{c}_r N^{\frac{r}{2}-\frac{3}{4}}I_{r-\frac{3}{2}}\left(\pi \sqrt{N}\right) + \tilde{d}_r^{\prime} N^{\frac{r}{2}-\frac{5}{4}}I_{r-\frac{5}{2}}\left(\pi \sqrt{N}\right) + O\left(N^{\frac{r}{2}-\frac{7}{4}e^{\pi \sqrt{N}}}\right),
\end{align}
where $I_s$ is the order $s$ modified Bessel function of the first kind.
\end{theorem}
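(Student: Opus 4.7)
The plan is to apply Wright's Circle Method. By Cauchy's theorem,
$$\overline{\mu}_r^+(N) = \frac{1}{2\pi i}\oint_{\mathcal{C}}\frac{\mathcal{SC}_r(q)}{q^{N+1}}\,dq,\qquad \overline{\eta}_r^+(N) = \frac{1}{2\pi i}\oint_{\mathcal{C}}\frac{\mathcal{SR}_r(q)}{q^{N+1}}\,dq,$$
and I would parametrize $q = e^{2\pi i\tau}$ with $\tau = x + iy$ and the saddle-motivated radius $y = 1/(4\sqrt{N})$. The integration interval $x\in[-1/2,1/2]$ splits into the major arc $|x|\leq y$ and the minor arc $y\leq|x|\leq 1/2$. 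On the minor arc, Proposition 3.5 gives $|\mathcal{SC}_r(q)|,\,|\mathcal{SR}_r(q)|\ll N^{r/2+1/4}e^{\pi\sqrt{N}/4}$; combined with $|e^{-2\pi iN\tau}| = e^{\pi\sqrt{N}/2}$ and arc-length $O(1)$, the minor-arc contribution is $O(N^{r/2+1/4}e^{3\pi\sqrt{N}/4})$, which is absorbed into the declared error $O(N^{r/2-7/4}e^{\pi\sqrt{N}})$.

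On the major arc I would substitute the two-term expansion from Proposition \ref{mt}. Its stated error $O(N^{r/2-5/4}e^{\pi\sqrt{N}/2})$, multiplied by $|e^{-2\pi iN\tau}|=e^{\pi\sqrt{N}/2}$ and integrated over the interval of length $2y\asymp N^{-1/2}$, yields exactly the target error $O(N^{r/2-7/4}e^{\pi\sqrt{N}})$. What remains is to evaluate the main-term integrals, which (after pulling out the scalars $c_r$, $d_r$, $d_r'$) all have the form
$$J_\alpha(N) := \int_{|x|\leq y}\sqrt{-i\tau/2}\,(-2\pi i\tau)^{-\alpha}\,e^{\pi i/(8\tau)-2\pi iN\tau}\,dx,\qquad \alpha\in\{r,\,r-1\}.$$

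To evaluate $J_\alpha(N)$ I would change variables to $u = -2\pi i\tau$, turning the integrand (up to constants) into $u^{1/2-\alpha}e^{Nu+\pi^2/(4u)}\,du$ integrated along a short vertical segment with $\mathrm{Re}(u)=2\pi y$. Extending this segment to the full vertical line $\{\mathrm{Re}(u)=2\pi y\}$ produces tails that decay polynomially in $|\mathrm{Im}(u)|$, and a standard estimate shows these are absorbed into the error already committed. Rescaling $t=Nu$ and invoking Hankel's integral representation
$$I_\nu(z) = \frac{(z/2)^\nu}{2\pi i}\int_{c-i\infty}^{c+i\infty}t^{-\nu-1}\,e^{t + z^2/(4t)}\,dt\qquad(c>0),$$
with $\nu = \alpha - 3/2$ and $z = \pi\sqrt{N}$ identifies $J_r(N)$ with a constant multiple of $N^{r/2-3/4}I_{r-3/2}(\pi\sqrt{N})$ and $J_{r-1}(N)$ with a constant multiple of $N^{r/2-5/4}I_{r-5/2}(\pi\sqrt{N})$. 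Collecting the explicit prefactors $\sqrt{-i/2}$, $(-2\pi i)^{-\alpha}$ and $2^{\alpha-3/2}\pi^{3/2-\alpha}$ and multiplying by $c_r$, $d_r$, $d_r'$ respectively defines $\tilde c_r$, $\tilde d_r$, $\tilde d_r'$.

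The main obstacle is the contour bookkeeping: one must verify carefully that the vertical tails introduced in extending to the Hankel contour really do fall within the error budget, and that the Hankel integral is convergent (or properly interpreted via analytic continuation) for small values of $r$ -- a situation parallel to the separate treatment of the cases $3\leq r\leq 5$ already required in the proof of Proposition 3.2. A secondary subtlety is the consistent choice of branches for $\sqrt{-i\tau/2}$ and $(-2\pi i\tau)^{-\alpha}$ along the deformed contour, so that the Bessel identification produces the correct overall sign and constant in each of $\tilde c_r$, $\tilde d_r$, $\tilde d_r'$.
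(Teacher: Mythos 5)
Your proposal follows the same overall strategy as the paper: Cauchy's theorem, Wright-style major/minor arc split at $|x|=1/(4\sqrt{N})$, minor-arc bound from the ``bounds away from the dominant pole'' proposition, and major-arc evaluation via the two-term expansion of Proposition \ref{mt}. The one substantive difference is in the key step that identifies the truncated major-arc integral with a modified Bessel function. The paper changes variables to reduce the major-arc integral to
$P_s = \frac{1}{2\pi i}\int_{1-i}^{1+i} v^s e^{\frac{\pi\sqrt{N}}{2}(v+1/v)}\,dv$
and then simply cites the lemma in \cite{KKS} that $P_s = I_{-s-1}(\pi\sqrt{N}) + O(e^{3\pi\sqrt{N}/4})$, for all $s$. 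You instead ``unfold'' this lemma: extend the short vertical segment to a full vertical line and invoke the vertical-line form of the Hankel integral $I_\nu(z)=\frac{(z/2)^\nu}{2\pi i}\int_{c-i\infty}^{c+i\infty}t^{-\nu-1}e^{t+z^2/(4t)}\,dt$. This is a legitimate alternative, and the bookkeeping you sketch (tail bound $O(e^{3\pi\sqrt{N}/4})$ after rescaling $t=Nu$, $\nu=\alpha-3/2$, etc.) matches the exponents in the theorem.

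The one place where your route genuinely differs in difficulty: the vertical-line representation you quote converges absolutely only for $\mathrm{Re}(\nu)>0$, i.e.\ only for $\alpha>3/2$. For the $\alpha=r-1$ term with $r\in\{1,2\}$, and for $\alpha=r$ with $r=1$, the exponent $\nu\in\{-3/2,-1/2,1/2\}$ falls at or below this threshold, and the tails do not converge; one must instead deform to a genuine loop (Hankel) contour around the negative real axis, where $e^t$ decays and the integral converges for all $\nu$. That is precisely what \cite{KKS} does, which is why the paper's appeal to that lemma covers all $r\geq1$ uniformly. You flag this (and the branch-cut/sign bookkeeping), but a complete version of your argument would need to switch from the vertical line to the loop contour for small $r$, not merely invoke analytic continuation. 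Modulo that repair, your approach recovers the theorem without external citation, while the paper's buys brevity and uniformity in $r$ by outsourcing the Bessel identification.
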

\begin{proof}
To compute the Fourier coefficients, we use Cauchy's theorem with the circle parametrized as $q = e^{-\frac{ \pi}{2 \sqrt{N}} + 2\pi i x}$:
$$
\overline{\mu}_r^{+} = \frac{1}{2 \pi i} \int_{\mathcal{C}} \frac{\mathcal{C}_r(q)}{q^{N+1}}dq = \int_{-\frac{1}{2}}^{\frac{1}{2}} \mathcal{C}_r \left(e^{-\frac{ \pi}{2 \sqrt{n}} + 2\pi i x}\right) e^{- 2\pi i N x + \frac{\pi \sqrt{N}}{2}}dx
$$
We split the integral in the following way:
$$
\int_{-\frac{1}{2}}^{\frac{1}{2}} = \int_{|x| \leq \frac{1}{4 \sqrt{N}}} + \int_{\frac{1}{4\sqrt{N}}\leq |x| \leq \frac{1}{2}}
$$ 
and show that the second sums can be absorbed into the error of the first sum, where $\tilde{I}_1$ is the integral referring to the first summand and analogous $\tilde{I}_2$ refers to the second summand of the splitting. We define $u:= 4\sqrt{N}x$ and obtain:
\begin{align*}
 \tilde{I}_1 &= \frac{1}{4 \sqrt{N}} \int_{-1}^{1} \mathcal{C}_r\left( e^{\frac{\pi i}{2 \sqrt{N}}}\right)e^{\frac{\pi \sqrt{N}}{2}(1 -iu)}du \\
     &= \frac{1}{4 \sqrt{N}} \int_{-1}^{1} \frac{c_r}{2 \sqrt{\pi}}\left(\frac{\pi }{2 \sqrt{N}}(1 -i u)\right)^{-r + \frac{1}{2}}e^{\frac{\pi  \sqrt{N}}{2}\left(\frac{1}{(i +u)} + 1 -iu\right)}du \\   
     \qquad&\qquad+ \frac{1}{4 \sqrt{N}} \int_{-1}^{1} \frac{d_r}{2 \sqrt{\pi}}\left(\frac{\pi }{2 \sqrt{N}}(i -i u)\right)^{-r + \frac{3}{2}}e^{\frac{\pi  \sqrt{N}}{2}\left(\frac{1}{(i +u)} + 1 -iu\right)}du \\
     \qquad&\qquad+ O\left(N^{\frac{r}{2} -\frac{5}{4}}e^{\pi \sqrt{N}}\right).	
\end{align*}
Choosing now $v:= 1-iu$ as a new variable, we obtain: 
$$\tilde{I}_1 = c_r \pi^{-r+1}2^{r-\frac{5}{2}}N^{\frac{r}{2}-\frac{3}{4}}P_{-r + \frac{1}{2}} + d_r \pi^{-r+2} 2^{r +\frac{7}{2}} N^{\frac{r}{2}-\frac{5}{4}}P_{-r + \frac{3}{2}} + O\left(N^{\frac{r}{2}-\frac{7}{4}}e^{\pi \sqrt{N}}\right),
$$
where we define
$$
P_s = \frac{1}{2 \pi i}\int_{1-i}^{1+i} v^s e^{\frac{\pi \sqrt{N}}{2}\left(v + \frac{1}{v}\right)}.
$$ 
In \cite{KKS}, it is proven that for large $N$ 
$$
P_{s} = I_{-s-1}(\pi \sqrt{N}) + O\left(e^{\frac{3\pi}{4}\sqrt{N}}\right),
$$
and that completes the proof for the positive symmetrized moments. The proof for the rank is analogous.
\end{proof}
Next we prove that away from the pole the error is small and can so be absorbed into the error of $\tilde{I}_1$.
\begin{proposition}
 For $N \rightarrow \infty$, we get 
$$
\tilde{I}_2 \ll N^{\frac{r}{2} +\frac{1}{4}}e^{\frac{3\pi}{4}\sqrt{N}}.
$$
\end{proposition}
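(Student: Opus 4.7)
The proposal is to bound $\tilde{I}_2$ by the standard triangle-inequality/sup-norm argument, using the estimate on $\mathcal{SC}_r(q)$ (resp.\ $\mathcal{SR}_r(q)$) away from the dominant pole established in the previous proposition.

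Concretely, the plan is as follows. First, I would write
\benn
\tilde{I}_2 = \int_{\frac{1}{4\sqrt{N}} \leq |x| \leq \frac{1}{2}} \mathcal{SC}_r\!\left(e^{-\frac{\pi}{2\sqrt{N}} + 2\pi i x}\right) e^{-2\pi i N x + \frac{\pi \sqrt{N}}{2}}\, dx.
\eenn
The exponential factor coming from $q^{-N-1}$ contributes only in modulus, namely $\bigl|e^{-2\pi i N x + \frac{\pi\sqrt{N}}{2}}\bigr| = e^{\frac{\pi\sqrt{N}}{2}}$, which is independent of $x$. Next, for $x$ in the range of $\tilde{I}_2$, the point $\tau = x + iy$ (with $y = \frac{1}{4\sqrt{N}}$) satisfies $y \leq |x| \leq \frac{1}{2}$, so the previous proposition applies and gives
\benn
\left| \mathcal{SC}_r\!\left(e^{-\frac{\pi}{2\sqrt{N}} + 2\pi i x}\right) \right| \ll N^{\frac{r}{2}+\frac{1}{4}} e^{\frac{\pi \sqrt{N}}{4}}.
\eenn
Combining the two bounds, and using that the total length of the integration domain is at most $1$, I obtain
\benn
\tilde{I}_2 \ll N^{\frac{r}{2}+\frac{1}{4}} e^{\frac{\pi\sqrt{N}}{4}} \cdot e^{\frac{\pi\sqrt{N}}{2}} = N^{\frac{r}{2}+\frac{1}{4}} e^{\frac{3\pi}{4}\sqrt{N}},
\eenn
as desired. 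The corresponding bound for the rank side follows identically, since the previous proposition supplies the same estimate for $\mathcal{SR}_r(q)$.

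There is essentially no obstacle to overcome here: all of the work has been absorbed into the bound on $\mathcal{SC}_r(q)$ and $\mathcal{SR}_r(q)$ away from the dominant pole proved in the preceding proposition. The only thing worth noting is the numerology: the exponent $\frac{3\pi}{4}\sqrt{N}$ in $\tilde{I}_2$ is strictly less than the dominant exponent $\pi\sqrt{N}$ appearing in $\tilde{I}_1$, which is precisely what is needed for $\tilde{I}_2$ to be absorbed into the error term $O\!\bigl(N^{r/2 - 7/4} e^{\pi\sqrt{N}}\bigr)$ in Theorem \ref{th2}.
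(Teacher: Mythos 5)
Your argument is exactly the paper's: bound the integrand in absolute value by the estimate for $\mathcal{SC}_r$ (resp.\ $\mathcal{SR}_r$) away from the dominant pole, multiply by the modulus $e^{\frac{\pi\sqrt{N}}{2}}$ of the factor coming from $q^{-N-1}$, and use that the arc has length at most $1$. The proof is correct and matches the paper's, including the closing observation that $e^{\frac{3\pi}{4}\sqrt{N}}$ is absorbed into the main error term.
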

\begin{proof}
 Using the bounds obtained away from the pole, we get the following result:
$$|\tilde{I}_2|\leq \int_{\frac{1}{4 \sqrt{N}}\leq |x| \leq \frac{1}{2}}\left| \mathcal{C}_{r}\left(e^{-\frac{\pi}{2 \sqrt{N}} + 2\pi i x}\right) e^{- 2\pi i N x + \frac{\pi \sqrt{N}}{2}}\right|dx \ll N^{\frac{r}{2} +\frac{1}{4}}e^{\frac{3\pi \sqrt{N}}{4}}.
$$
As the rank has the same bound away from the dominant pole the same result holds for the rank. 
\end{proof}
\begin{proof}[Proof of Theorem \ref{th1}]
We use the asymptotic formula for general index $s$ modified Bessel function of the first kind as $x \rightarrow \infty$
$$
I_s(x) = \frac{e^{x}}{\sqrt{2 \pi x}} + O\left(\frac{e^x}{x^{\frac{3}{2}}} \right).
$$
That settles directly part one of the theorem.
For part two we rewrite (using (\ref{smum}))
$$
\overline{M}^+_r(N)-\overline{N}^+_r(N) = r! \left(\overline{\mu}^+_r - \overline{\eta}^+_r \right) + \sum_{l=1} a_{\ell}\left( \overline{\mu}^+_{\ell} - \overline{\eta}^+_{\ell}\right)
$$
Using Theorem \ref{th2} every term in the sum is at most $O\left(N^{\frac{r}{2}-2}e^{\pi \sqrt{N}}\right)$. Inserting the asymptotics of the Bessel function yields the theorem for $r\geq3$. The result for $r=1$ is given in \cite{KKS} and the result for $r=2$ can be done with similar methods as in \cite{B}. Due to the length we skip the proof. 
\end{proof}
\begin{proof}[Proof of corollary \ref{co1}]
 For $r=2$ the proof follows from \cite{BLO}. For $r\geq 3$ the proof follows from the fact that the constant in front of Theorem \ref{th1} (ii) is positive. For $r=1$ the inequality $\overline{M}^+_r(N) - \overline{N}^+_r(N)>0$ follows from
\cite{ABKO}.
\end{proof}

\end{document}